\documentclass[12pt]{article}
\usepackage[top=3cm,bottom=3cm,left=3cm,right=3cm]{geometry}

\usepackage{amsmath,amsthm,amssymb,authblk,enumerate}

\def\d{\delta}
\def\g{\gamma}
\def\s{\sigma}
\def\t{\tilde}

\def\R{\mathbf R}
\def\P{\mathbf P}

\DeclareMathOperator\spa{{span}}

\numberwithin{equation}{section}
\newtheorem{thm}{Theorem}[section]
\newtheorem{lem}[thm]{Lemma}

\begin{document}
\title{Bivariate Lagrange interpolation at the checkerboard nodes}

\author[1]{Lihua Cao}
\author[2]{Srijana Ghimire}
\author[2]{Xiang-Sheng Wang\thanks{Corresponding author. Email: xswang@louisiana.edu}}

\affil[1]{College of Mathematics and Statistics, Shenzhen University, Shenzhen, Guangdong 518060, China}
\affil[2]{Department of Mathematics, University of Louisiana at Lafayette, Lafayette, LA 70503, USA}

\maketitle

\begin{abstract}
In this paper, we derive an explicit formula for the bivariate Lagrange basis polynomials of a general set of checkerboard nodes. This formula generalizes existing results of bivariate Lagrange basis polynomials at the Padua nodes, Chebyshev nodes, Morrow-Patterson nodes, and Geronimus nodes. We also construct a subspace spanned by linearly independent bivariate vanishing polynomials that vanish at the checkerboard nodes and prove the uniqueness of the set of bivariate Lagrange basis polynomials in the quotient space defined as the space of bivariate polynomials with a certain degree by the subspace of bivariate vanishing polynomials.
\end{abstract}

\noindent {\bf Keywords:} Bivariate Lagrange basis polynomials; checkerboard nodes; bivariate vanishing polynomials; existence and uniqueness.

\bigskip

\noindent {\bf AMS Subject Classification:} Primary 65D05; Secondary 42C05.

\newpage

\section{Introduction}

Given $x_0>x_1>\cdots>x_n$ and $y_0>y_1>\cdots>y_{n+\s}$ where $n$ and $\s$ are nonnegative integers, we define
a rectangular set of nodes:
\begin{equation}
  S=\{(x_r,y_u):~0\le r\le n,~0\le u\le n+\s\},
\end{equation}
which consists of $(n+1)(n+\s+1)$ distinct points in $\R^2$.
The set $S$ can be divided into two checkerboard sets $S_0$ and $S_1$ such that $(x_r,y_u)\in S_0$ if and only if $r+u$ is even while $(x_r,y_u)\in S_1$ if and only if $r+u$ is odd.
Our objective is to develop existence and uniqueness theory of bivariate Lagrange basis polynomials for the checkerboard set $S_\tau$ with $\tau=0$ or $\tau=1$. The special case $\s=0$ was considered in \cite{Harris18JCAM}. The special case $\s=1$ was studied in \cite{Harris15JAT,Harris21PAMS}.
If $\s=1$, $x_r=\cos(r\pi/n)$ and $y_u=\cos[u\pi/(n+1)]$, then $S_\tau$ is the set of Padua points and the corresponding set of bivariate Lagrange basis polynomials is unique in $\P_n(x,y)$; see \cite{Bos06JAT,Bos07NM}.
In \cite{Xu96JAT}, Xu derived bivariate Lagrange basis polynomials when $\s=0$ and $x_k=y_k=\cos[(2k-1)\pi/(2n)]$ are zeros of Chebyshev polynomial of first kind $T_n$; see also \cite{Bojanov97JCAM,Harris10PAMS}.
The checkerboard nodes $S_\tau$ also generalize the Morrow-Patterson nodes \cite{Morrow78SINA} and Geronimus nodes \cite{Harris15JAT}.
In \cite{Harris21PAMS}, several formulas of bivariate Lagrange basis polynomials for the cases $\s=2,3,4,5$ were proposed as conjectures.
In this paper, we will derive a general formula of bivariate Lagrange basis polynomials for any nonnegative integer $\s$.
This formula generalizes the aforementioned results of bivariate Lagrange basis polynomials at the Padua nodes, Chebyshev nodes, Morrow-Patterson nodes and Geronimus nodes. Especially, the conjectures in \cite{Harris21PAMS} for the cases $\s=2,3,4,5$ are proved.
Moreover, we will prove that the set of bivariate Lagrange basis polynomials is unique in a certain quotient space of bivariate polynomials.

Let $\P_d(x,y)$ be the linear space of bivariate polynomials of degree no more than $d$, which can be generated by the monomials $x^jy^k$ with $j+k\le d$.
It is easily seen that the dimension of $\P_d(x,y)$ is $(d+1)(d+2)/2$.
If $f_1(x,y),\cdots,f_M(x,y)$ are linearly independent polynomials in $\P_d(x,y)$; namely, $c_1f_1(x,y)+\cdots+c_Mf_M(x,y)=0$ for all $(x,y)\in\R^2$ implies $c_1=\cdots=c_M=0$, then we can define the quotient space $\P_d(x,y)/\{f_1(x,y),\cdots,f_M(x,y)\}$ in the sense that two polynomials in this quotient space are identical if and only if their difference can be expressed as a linear combination of $f_1(x,y),\cdots,f_M(x,y)$. Clearly, the dimension of $\P_d(x,y)/\{f_1(x,y),\cdots,f_M(x,y)\}$ is $(d+1)(d+2)/2-M$.

Given a set of nodes $(x_1,y_1),\cdots,(x_N,y_N)\in\R^2$, we say $\{L_1(x,y),\cdots,L_N(x,y)\}\subset\P_d(x,y)$ is
a set of bivariate Lagrange basis polynomials if $L_j(x_k,y_k)=0$ for $1\le j\neq k\le N$ and $L_k(x_k,y_k)=1$ for $1\le k\le N$.
For convenience, we also define the {\it bivariate vanishing polynomial} as a bivariate polynomial $f(x,y)\in\P_d(x,y)$ that vanishes at all of the given nodes; namely, $f(x_k,y_k)=0$ for all $1\le k\le N$.

The rest of this paper is organized as follows. In Section 2, we state some preliminary results on one-to-one map between a sequence of univariate nodes and a sequence of difference equations. We also give a necessary and sufficient condition for the uniqueness of bivariate Lagrange basis polynomials.
In Section 3, we construct the bivariate Lagrange basis polynomials for $S_\tau$ with general $\s$.
In Section 4, we prove the uniqueness of the bivariate Lagrange basis polynomials for $S_\tau$ in a certain quotient space of bivariate polynomials.

\section{Preliminary results}

We first rephrase the results in \cite[Lemma 2]{Harris18JCAM} and \cite[Theorem A.1 \& Lemma A.2]{Harris21PAMS} as the following lemma.
\begin{lem}\label{lem-existence}
  Given any $x_0>x_1>\cdots>x_n$, there exists a sequence of orthogonal polynomials $\{p_k(x)\}_{k=0}^n$ determined by two sequences $\{a_k\}_{k=0}^{n-1}$ and $\{b_k\}_{k=0}^{n-1}$ such that $p_0(x)=1$, $p_1(x)=a_0x+b_0$, and
  \begin{equation}
    p_{k+1}(x)+p_{k-1}(x)=(a_kx+b_k)p_k(x),
  \end{equation}
  for $1\le k\le n-1$, and the following properties hold.
  \begin{enumerate}
    \item (positivity) $a_k>0$ for all $0\le k\le n-1$.
    \item (reflection) $a_k=a_{n-k}$ and $b_k=b_{n-k}$ for all $1\le k\le n-1$.
    \item (alternation) $p_{n-k}(x_j)=(-1)^jp_k(x_j)$ for all $0\le j,k\le n$.
  \end{enumerate}
\end{lem}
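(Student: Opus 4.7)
The plan is to assemble the claim from the standard theory of discrete orthogonal polynomials. Choose positive weights $w_0,\ldots,w_n$ and form the discrete inner product $\langle f,g\rangle=\sum_j w_j f(x_j)g(x_j)$ on $\P_n(x)$. Gram--Schmidt on $1,x,\ldots,x^n$ produces monic orthogonal polynomials $\pi_k$ obeying a three-term recurrence $\pi_{k+1}=(x-\alpha_k)\pi_k-\beta_k\pi_{k-1}$ with $\beta_k>0$. A rescaling $p_k=c_k\pi_k$ with $c_0=1$ and $c_{k+1}=c_{k-1}/\beta_k$ converts this into the target form $p_{k+1}+p_{k-1}=(a_kx+b_k)p_k$ with $a_k=c_{k+1}/c_k$ and $b_k=-\alpha_k c_{k+1}/c_k$; an easy induction shows $c_k>0$, hence $a_k>0$, giving property (1).

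To obtain the reflection and alternation properties, I would pin down the weights by requiring the polynomial $p_n$ to be the Lagrange interpolant with $p_n(x_j)=(-1)^j$. Taking $w_j$ proportional to $1/(p_n'(x_j)\omega'(x_j))$ with $\omega(x)=\prod_i (x-x_i)$ makes Gauss quadrature exact on $\P_{2n-1}(x)$, which forces this $p_n$ to be orthogonal to $\P_{n-1}(x)$ and therefore to coincide with the $n$-th member of the family constructed above. The value-sign-flip operator $\Phi\colon f(x_j)\mapsto(-1)^j f(x_j)$ should then be an isometry of the inner product that sends $p_k$ to $p_{n-k}$; applying $\Phi$ to the three-term recurrence immediately yields $a_{n-k}=a_k$, $b_{n-k}=b_k$ and $p_{n-k}(x_j)=(-1)^j p_k(x_j)$. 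These last two statements can equivalently be established jointly by induction on $k$ from the base cases $k=0$ (which is the defining property of $p_n$) and $k=n$ (which is a tautology), using the recurrence to propagate the alternation identity.

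The delicate step is verifying that $\Phi$ is genuinely an isometry of the weighted inner product for this specific choice of weights, equivalently that the weights inherit the symmetry implicit in $p_n(x_j)=(-1)^j$. I would handle this either by a direct computation of the Christoffel--Darboux kernel at the nodes, or, following the appendix of \cite{Harris21PAMS}, by a combined induction that produces the coefficients $a_k, b_k$ and the nodal values $p_k(x_j)$ in lockstep: assuming reflection and alternation at all indices $<k$, read off the new coefficients $a_{n-k}, b_{n-k}$ from the recurrence applied to $\Phi p_k$, and use the uniqueness of the recurrence coefficients for an orthogonal family to identify them with $a_k, b_k$.
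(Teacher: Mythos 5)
First, a point of comparison: the paper does not actually prove this lemma --- it is stated as a rephrasing of \cite[Lemma 2]{Harris18JCAM} and \cite[Theorem A.1 \& Lemma A.2]{Harris21PAMS}, so there is no in-paper argument to measure you against. Your overall strategy (a discrete inner product at the nodes plus the sign-flip operator $\Phi$) is the right circle of ideas and close in spirit to those references, but as written it has a broken step and a circular step. The weight formula $w_j\propto 1/(p_n'(x_j)\omega'(x_j))$ is not usable: in the model case $x_j=\cos(j\pi/n)$ one has $p_n=T_n$ and $T_n'(x_j)=0$ at every interior node, so you divide by zero; and the appeal to Gauss quadrature exact on $\P_{2n-1}(x)$ does not apply, since the $x_j$ are alternation points of $p_n$, not its zeros. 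The choice that actually forces the $\pm1$-interpolant $p_n$ to be orthogonal to $\P_{n-1}(x)$ is $w_j=(-1)^j/\omega'(x_j)=1/|\omega'(x_j)|$, unique up to scale, via the divided-difference identity $\sum_j g(x_j)/\omega'(x_j)=0$ for $\deg g\le n-1$.

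More seriously, the assertion that $\Phi$ ``sends $p_k$ to $p_{n-k}$'' is essentially the whole content of the reflection and alternation properties, and the isometry of $\Phi$ (which is trivial, since $\Phi$ multiplies nodal values by $\pm1$) does not deliver it. What one can extract --- using the divided-difference identity on $p_kp_l$ to get $\langle\Phi p_k,p_l\rangle=0$ for $k+l<n$, together with $\Phi^2=I$ and self-adjointness --- is only $\Phi p_k=\pm\bigl(\|p_k\|/\|p_{n-k}\|\bigr)p_{n-k}$; the sign can be fixed to $+$ by a leading-coefficient computation, but the constant equals $1$ only if $\|p_k\|=\|p_{n-k}\|$. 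Taking inner products of the recurrence with $p_{k\pm1}$ gives $a_{k+1}\|p_{k+1}\|^2=a_k\|p_k\|^2$, so $\|p_k\|=\|p_{n-k}\|$ is \emph{equivalent} to $a_k=a_{n-k}$ --- precisely the reflection property you are trying to prove. So the argument is circular exactly at the step you yourself flag as delicate, and the ``lockstep induction'' that would break the circle (exploiting the one free normalization constant, or arguing simultaneously from both ends of the recurrence, as in the appendix of \cite{Harris21PAMS}) is the part that is not carried out.
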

It is nature to ask whether the map from the set of distinct nodes $X=\{x_k\}_{k=0}^n$ to the set of coefficients $(A,B)=\{(a_k,b_k)\}_{k=0}^{n-1}$ satisfying the positivity and reflection conditions is invertible and unique. It is easy to show that the alternation condition implies the invertibility of the map.
When $n$ is odd, the dimension of $(A,B)$ is $n+1$, which suggests that the map might be unique.
However, when $n$ is even, the dimension of $(A,B)$ becomes $n+2$, which indicates that there is one more degree of freedom in $(A,B)$.
We shall prove that this additional degree of freedom can be removed by a normalization condition $a_0=1$.

\begin{thm}\label{thm-inv}
  Given any sequences $\{a_k\}_{k=0}^{n-1}$ and $\{b_k\}_{k=0}^{n-1}$ satisfying the positivity and reflection conditions; namely, $a_k>0$ for all $0\le k\le n-1$ and $a_k-a_{n-k}=b_k-b_{n-k}=0$ for all $1\le k\le n-1$, there exists a unique set of nodes $x_0>x_1>\cdots>x_n$ satisfying the alternation condition $p_{n-k}(x_j)=(-1)^jp_k(x_j)$ for all $0\le j,k\le n$, where $\{p_k(x)\}_{k=0}^n$ is the sequence of orthogonal polynomials determined by the difference equation $p_{k+1}(x)+p_{k-1}(x)=(a_kx+b_k)p_k(x)$ for $1\le k\le n-1$ with initial conditions $p_0(x)=1$ and $p_1(x)=a_0x+b_0$.
\end{thm}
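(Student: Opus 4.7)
The plan is to encode the full alternation condition as the vanishing of a single polynomial of degree $n+1$ (the Casoratian of two solutions of the three-term recurrence), and then to count its real roots via a symmetric factorization at the central index. First, I would exploit the reflection condition to observe that, for any fixed $x\in\R$, the two sequences $u_k=p_k(x)$ and $v_k=p_{n-k}(x)$ satisfy the same second-order linear recurrence $z_{k+1}+z_{k-1}=(a_k x+b_k)z_k$ for $1\le k\le n-1$. Their Casoratian
\[
W(x):=p_k(x)\,p_{n-k-1}(x)-p_{k+1}(x)\,p_{n-k}(x)
\]
is therefore independent of $k\in\{0,\ldots,n-1\}$, and at $k=0$ it reads $W(x)=p_{n-1}(x)-p_1(x)\,p_n(x)$, a polynomial of degree exactly $n+1$ whose leading coefficient $-a_0^2 a_1\cdots a_{n-1}$ is nonzero by positivity.

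Since two solutions of a second-order linear recurrence are proportional if and only if they agree at two consecutive indices, the alternation identities $p_{n-k}(x_j)=(-1)^j p_k(x_j)$ for all $k$ are equivalent to $p_n(x_j)=(-1)^j$ together with $W(x_j)=0$. Conversely, $W(x_j)=0$ alone forces the sequences $\{u_k\}$ and $\{v_k\}$ to be proportional with ratio $p_n(x_j)$, and consistency at $k=n$ (where $v_n=p_0=1$) then yields $p_n(x_j)^2=1$. In particular, every admissible node must lie in the zero set of $W$; since $\deg W=n+1$, this already delivers uniqueness, provided existence is established.

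To produce the $n+1$ real zeros of $W$ with the correct sign pattern for $p_n$, I would evaluate the Casoratian at the central index. For $n=2m+1$ odd, taking $k=m$ gives the symmetric factorization
\[
W(x)=p_m(x)^2-p_{m+1}(x)^2=[p_m(x)-p_{m+1}(x)]\,[p_m(x)+p_{m+1}(x)],
\]
with each factor of degree $m+1$; for $n=2m$ even, taking $k=m-1$ gives $W(x)=p_m(x)\,[p_{m-1}(x)-p_{m+1}(x)]$, with factors of degrees $m$ and $m+1$. The proportionality-ratio analysis of the previous paragraph identifies the zeros of one factor with $p_n(x)=+1$ and the zeros of the other with $p_n(x)=-1$, so once each factor is shown to split into distinct real zeros that interlace strictly with those of the other, the $n+1$ zeros of $W$ listed decreasingly as $\xi_0>\xi_1>\cdots>\xi_n$ automatically satisfy $p_n(\xi_j)=(-1)^j$.

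The main obstacle is this interlacing claim. It is the analog, for the combinations $p_m\pm p_{m+1}$ (odd $n$) or $p_m$ and $p_{m-1}-p_{m+1}$ (even $n$), of the classical interlacing theorem for consecutive orthogonal polynomials, and it must invoke positivity and reflection together. I would approach it by analyzing the rational function $p_{m+1}/p_m$ on each interval between consecutive zeros of $p_m$: the positivity of the recursion coefficients makes $p_m$ an orthogonal polynomial (with real simple zeros) and forces $p_{m+1}/p_m$ to be monotonic on each such interval, so the equations $p_{m+1}/p_m=\pm 1$ each have exactly one solution per interval, and the reflection symmetry pins down the order in which $+1$ and $-1$ are attained. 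Combined with the preceding two paragraphs, this yields both existence and uniqueness of the nodes.
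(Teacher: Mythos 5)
Your proposal is correct in substance but organized quite differently from the paper's proof. The paper works directly with the central polynomials: for $n=2m-1$ odd it interlaces the zeros of $p_m$ and $p_{m-1}$ and counts sign changes of $p_m\mp p_{m-1}$ on the resulting intervals; for $n=2m$ even it locates one zero of $p_{m+1}-p_{m-1}$ between consecutive zeros of $p_m$; in both cases it then asserts that the full alternation condition propagates from the central index via the recurrence and the reflection condition. Your Casoratian formulation packages the same information more systematically: the constancy of $W(x)=p_kp_{n-k-1}-p_{k+1}p_{n-k}$ is exactly where reflection enters, the equivalence of full alternation at a point with $W=0$ there plus the correct sign of $p_n$ is the "propagation" step made explicit, and the count $\deg W=n+1$ yields uniqueness immediately --- a point the paper leaves implicit, since its argument is purely constructive. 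Your central factorizations $W=p_m^2-p_{m+1}^2$ (odd $n=2m+1$) and $W=p_m\,(p_{m-1}-p_{m+1})$ (even $n=2m$) coincide with the paper's objects up to an index shift. Two small corrections to your sketch of the remaining interlacing step: in the even case the monotone quantity you need is $(p_{m+1}-p_{m-1})/p_m$, not $p_{m+1}/p_m$ (both monotonicities follow from the confluent Christoffel--Darboux identity $p_ip_{i+1}'-p_{i+1}p_i'=\sum_{j\le i}a_jp_j^2>0$, which is where positivity is used); and it is this monotonicity together with the behavior of the ratio at $\pm\infty$ and at the poles, not the reflection symmetry, that pins down the order in which the values $+1$ and $-1$ are attained on each interval --- reflection has already done its work in making $k\mapsto p_{n-k}(x)$ a solution of the same recurrence. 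With those adjustments your argument closes, and it buys a cleaner uniqueness statement at the cost of slightly more machinery.
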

\begin{proof}
  When $n=2m-1$ is odd, we denote by $u_1>\cdots>u_m$ the zeros of $p_m(x)$ and $v_1>v_2>\cdots>v_{m-1}$ the zeros of $p_{m-1}(x)$.
  By alternation property of zeros of orthogonal polynomials, we have $u_1>v_1>u_2>v_2>\cdots>v_{m-1}>u_m$.
  Since both $p_m(x)$ and $p_{m-1}(x)$ have positive leading coefficients, the difference function $p_m(x)-p_{m-1}(x)$ has at least one zero at each of the intervals $(u_1,\infty),(u_2,v_1),\cdots,(u_m,v_{m-1})$ because the difference takes opposite signs when the variable $x$ approaches the two ends of each interval. Similarly, the sum function $p_m(x)+p_{m-1}(x)$ has at least one zero at each of the intervals $(v_1,u_1),\cdots,(v_{m-1},u_{m-1}),(-\infty,u_m)$.
  Thus, we can order the zeros of $p_m(x)-p_{m-1}(x)$ and $p_m(x)+p_{m-1}(x)$ as $x_0>x_1>\cdots>x_n$ such that $p_m(x_j)-(-1)^jp_{m-1}(x_j)=0$ for all $0\le j\le n$.
  Actually, we have $x_0>u_1>x_1>v_1>x_2>v_2>\cdots>v_{m-1}>x_{n-1}>u_m>x_n$.
  It then follows from the difference equation and the reflection condition that $p_{n-k}(x_j)=(-1)^jp_k(x_j)$ for all $0\le j,k\le n$.

  When $n=2m$ is even, the zeros of $p_m(x)$ divide the real line into $m+1$ intervals, each of which contains at least one zero of $p_{m+1}(x)-p_{m-1}(x)$, thanks to the alternation property of orthogonal polynomials.
  Hence, we can order the zeros of $p_m(x)$ and $p_{m+1}(x)-p_{m-1}(x)$ as $x_0>x_1>\cdots>x_n$ such that $p_m(x_j)=0$ for all odd $j=1,3,\cdots,n-1$ and $p_{m+1}(x_j)=p_{m-1}(x_j)$ for all even $j=0,2,\cdots,n$.
  It then follows from the difference equation and the reflection condition that $p_{n-k}(x_j)=(-1)^jp_k(x_j)$ for all $0\le j,k\le n$.
  This completes the proof.
\end{proof}

\begin{thm}
  Given any $x_0>x_1>\cdots>x_n$, if there are two sets of orthogonal polynomials $\{p_k(x)\}_{k=0}^n$ and $\{\t p_k(x)\}_{k=0}^n$, which are determined by two sets of coefficients $(A,B)=\{(a_k,b_k)\}_{k=0}^{n-1}$ and $(\t A,\t B)=\{(\t a_k,\t b_k)\}_{k=0}^{n-1}$, such that the difference equation and three properties (positivity, reflection and alternation) in Lemma \ref{lem-existence} are satisfied, then we have the following results depending on whether $n$ is odd or even.
  \begin{enumerate}
    \item If $n$ is odd, then $\t a_k=a_k$ and $\t b_k=b_k$ for $0\le k\le n-1$.
    \item If $n$ is even, then there exists a positive constant $\g$ such that $\t a_k/a_k=\t b_k/b_k=\g$ for even $k=0,2,\cdots,n$ and $a_k/\t a_k=b_k/\t b_k=\g$ for odd $k=1,3,\cdots,n-1$.
  \end{enumerate}
\end{thm}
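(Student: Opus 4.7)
The plan is to use the fact that $p_n$ is uniquely determined by the nodes, together with alternation at the middle index, to pin down the ``middle'' polynomials up to a single scalar, and then to propagate this information through the recurrence. The starting point is that both $p_n$ and $\t p_n$ are polynomials of degree $n$ taking the value $(-1)^j$ at the $n+1$ distinct points $x_j$, so they coincide as polynomials; in particular $\prod_{k=0}^{n-1}\t a_k=\prod_{k=0}^{n-1}a_k$. The normalization $p_0=1$ will kill the remaining scalar when $n$ is odd, while in the even case it survives as the parameter $\g$.

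Suppose first that $n=2m-1$ is odd. Alternation at $k=m-1$ gives $p_m(x_j)=(-1)^j p_{m-1}(x_j)$, so $p_m-p_{m-1}$ vanishes at the $m$ even-indexed nodes while $p_m+p_{m-1}$ vanishes at the $m$ odd-indexed nodes; since both are of degree $m$ with common leading coefficient $A_m:=\prod_{k=0}^{m-1}a_k$, the nodes determine $p_m\pm p_{m-1}$ (and hence $p_m,p_{m-1}$ themselves) up to the single scalar $A_m$. The coefficients $a_{m-1},b_{m-1}$ are then recovered by matching the top two coefficients in $(a_{m-1}x+b_{m-1})p_{m-1}=p_m+p_{m-2}$, which expresses them as ratios of leading coefficients of $p_m,p_{m-1}$ and is therefore invariant under the common rescaling $p_m,p_{m-1}\mapsto\lambda p_m,\lambda p_{m-1}$. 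An induction downward shows that every $a_k,b_k$ is likewise invariant while $\t p_k=\lambda p_k$ for all $k\ge 1$; since $\t p_0=p_0=1$, we must have $\lambda=1$, giving $\t a_k=a_k$ and $\t b_k=b_k$.

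For $n=2m$ even, alternation at $k=m$ forces $p_m(x_j)=0$ for $j$ odd, so $p_m=A_m\prod_{j\text{ odd}}(x-x_j)$, again determined by the nodes up to the single scalar $A_m$. Running the analogous leading-coefficient/recurrence propagation in both directions should give $\t p_k=\g^{k\bmod 2}p_k$ for a single constant $\g>0$, the one genuine degree of freedom in the even case; reading off the recurrence then yields $\t a_k/a_k=\t b_k/b_k=\g$ for even $k$ and $a_k/\t a_k=b_k/\t b_k=\g$ for odd $k$, as claimed.

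The main obstacle lies in the even case: since reflection is vacuous at $k=m$ when $n=2m$, the middle coefficients $a_m,b_m$ are not directly constrained, so a priori one expects more freedom than in the odd case. I would have to verify that the alternation condition at $k=m-1$ (relating $p_{m+1}$ and $p_{m-1}$), together with the recurrence at $k=m$, pins down $(a_m,b_m)$ and the entire lower tower $p_{m-1},p_{m-2},\dots$ as functions of the single scalar $\g$, and that the parity-dependent action $\g^{k\bmod 2}$ emerges from the fact that the boundary condition $p_0=1$ no longer eliminates the scalar (because, in contrast with the odd case, the propagation from $p_m$ to $p_0$ now preserves a one-parameter family consistent with $p_0=1$).
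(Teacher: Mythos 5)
Your overall route is the same as the paper's: identify the two "middle" polynomials that the alternation condition determines up to scalars (for $n=2m-1$ these are $p_m\pm p_{m-1}$; for $n=2m$ they are $p_m$ and $p_{m+1}-p_{m-1}$), then propagate the scalars down through the three-term recurrence and use $p_0=1$ to normalize. Your odd case is complete and correct, and your observation that $p_n=\tilde p_n$ outright (both take the values $(-1)^j$ at the $n+1$ nodes) is a nice touch the paper does not make explicit; just remember to invoke reflection at the end to transfer $\tilde a_k=a_k$, $\tilde b_k=b_k$ from $k\le m-1$ to $k\ge m$.

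The gap is exactly where you flag it: in the even case you assert, but do not prove, that the recurrence at $k=m$ pins down $(\tilde a_m,\tilde b_m)$ and yields $\tilde p_{m\pm1}=\gamma_2\,p_{m\pm1}$. This is the decisive step and it does not follow from leading coefficients alone. The argument you need (and the one the paper uses) is: from $\tilde p_m=\gamma_1 p_m$ and $\tilde p_{m+1}-\tilde p_{m-1}=\gamma_2(p_{m+1}-p_{m-1})$, write $\tilde p_{m+1}+\tilde p_{m-1}=(\tilde a_m x+\tilde b_m)\gamma_1 p_m$ and compare with $\gamma_2(p_{m+1}+p_{m-1})=\gamma_2(a_mx+b_m)p_m$. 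Matching top-degree coefficients gives $\gamma_1\tilde a_m=\gamma_2 a_m$, so the difference of the two "plus" expressions equals $(\gamma_1\tilde b_m-\gamma_2 b_m)p_m$; combining with the "minus" relation shows this difference also equals $2(\tilde p_{m-1}-\gamma_2 p_{m-1})$, a polynomial of degree at most $m-1$, which forces $\gamma_1\tilde b_m=\gamma_2 b_m$ and hence $\tilde p_{m\pm1}=\gamma_2 p_{m\pm1}$. From there the downward recurrence alternates the two constants ($\gamma_1 a_{m-1}=\gamma_2\tilde a_{m-1}$, $\tilde p_{m-2}=\gamma_1 p_{m-2}$, etc.), and $\tilde p_0=p_0=1$ kills $\gamma_1$ or $\gamma_2$ according to the parity of $m$, leaving the single surviving $\gamma$ with exactly the parity-dependent action you predicted. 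Without this degree-counting step your even case is only a plausible plan, not a proof.
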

\begin{proof}
  When $n=2m-1$ is odd, it follows from the alternation condition that the zeros of $p_m(x)+p_{m-1}(x)$ and $\t p_m(x)+\t p_{m-1}(x)$ are the same while the zeros of $p_m(x)-p_{m-1}(x)$ and $\t p_m(x)-\t p_{m-1}(x)$ are the same. The positivity condition implies that there exist two positive constants $\g_1$ and $\g_2$ such that $\t p_m(x)+\t p_{m-1}(x)=\g_1[p_m(x)+p_{m-1}(x)]$ and $\t p_m(x)-\t p_{m-1}(x)=\g_2[p_m(x)-p_{m-1}(x)]$.
  Comparing the leading coefficients yields $\g_1=\g_2$. A simple combination of these two equations then gives $\t p_m(x)=\g_1 p_m(x)$ and $\t p_{m-1}(x)=\g_1 p_{m-1}(x)$. In view of the difference equations $\t p_m(x)+\t p_{m-2}(x)=(\t a_{m-1}x+\t b_{m-1})\t p_{m-1}(x)$ and $p_m(x)+p_{m-2}(x)=(a_{m-1}x+b_{m-1})\t p_{m-1}(x)$, we obtain $\t a_{m-1}=a_{m-1}$, $\t b_{m-1}=b_{m-1}$ and $\t p_{m-2}(x)=\g_1 p_{m-2}(x)$. Repeating this argument implies $\t a_k=a_k$ and $\t b_k=b_k$ for $0\le k\le m-1$. Moreover, $\t p_0(x)=\g_1p_0(x)$, which yields $\g_1=1$. The reflection condition then gives $\t a_k=a_k$ and $\t b_k=b_k$ for $0\le k\le n-1$.

  When $n=2m$ is even, it follows from the alternation condition that the zeros of $p_m(x)$ and $\t p_m(x)$ are the same while the zeros of $p_{m+1}(x)-p_{m-1}(x)$ and $\t p_{m+1}(x)-\t p_{m-1}(x)$ are the same. The positivity condition implies that there exist two positive constants $\g_1$ and $\g_2$ such that $\t p_m(x)=\g_1 p_m(x)$ and $\t p_{m+1}(x)-\t p_{m-1}(x)=\g_2[p_{m+1}(x)-p_{m-1}(x)]$.
  On account of the difference equations $\t p_{m+1}(x)+\t p_{m-1}(x)=(\t a_mx+\t b_m)\t p_m(x)$ and $p_{m+1}(x)+p_{m-1}(x)=(a_mx+b_m)p_m(x)$, we obtain
  $\g_2a_m=\g_1\t a_m$, $\g_2b_m=\g_1\t b_m$, and $\t p_{m+1}(x)+\t p_{m-1}(x)=\g_2[p_{m+1}(x)+p_{m-1}(x)]$.
  Consequently, $\t p_{m+1}(x)=\g_2p_{m+1}(x)$ and $\t p_{m-1}(x)=\g_2p_{m-1}(x)$.
  It then follows from the difference equations $\t p_m(x)+\t p_{m-2}(x)=(\t a_{m-1}x+\t b_{m-1})\t p_{m-1}(x)$ and $p_m(x)+p_{m-2}(x)=(a_{m-1}x+b_{m-1})\t p_{m-1}(x)$ that $\g_1a_{m-1}=\g_2\t a_{m-1}$, $\g_1b_{m-1}=\g_2\t b_{m-1}$ and $\t p_{m-2}(x)=\g_1p_{m-2}(x)$.
  Repeating this argument gives
  $$\g_1a_{m-j}=\g_2\t a_{m-j},~\g_1b_{m-j}=\g_2\t b_{m-j},~\t p_{m-j}(x)=\g_2p_{m-j}(x)$$
  for odd $j\le m$, and
  $$\g_2a_{m-j}=\g_1\t a_{m-j},~\g_2b_{m-j}=\g_1\t b_{m-j},~\t p_{m-j}(x)=\g_1p_{m-j}(x)$$
  for even $j\le m$. Since $\t p_0(x)=p_0(x)=1$, either $\g_1=1$ (when $m$ is even) or $\g_2=1$ (when $m$ is odd).
  We denote $\g=\g_2$ if $m$ is even and $\g=\g_1$ if $m$ is odd. It then follows that
  $$\t a_{2j}=\g a_{2j},~\t b_{2j}=\g b_{2j},~~\t p_{2j}(x)=p_{2j}(x),$$
  for $j=0,\cdots,m$, and
  $$\g\t a_{2j+1}=a_{2j+1},~~\g\t b_{2j+1}=b_{2j+1},~~\t p_{2j+1}(x)=\g p_{2j}(x),$$
  for $j=0,\cdots,m-1$. This completes the proof.
\end{proof}

For the univariate case, the set of Lagrange basis polynomials for any set of distinct points exists and is uniquely determined because the corresponding Vandermonde matrix is invertible. The following theorem gives criteria for uniqueness of bivariate Lagrange basis polynomials.

\begin{thm}\label{thm-uniqueness}
  Given any distinct points $(x_1,y_1),\cdots,(x_N,y_N)\in\R^2$ and any positive integer $d$ such that $(d+1)(d+2)/2\ge N$, there exist at least $M=(d+1)(d+2)/2-N$ linear independent bivariate vanishing polynomials, denoted by $f_1(x,y),\cdots,f_M(x,y)$, in $\P_d(x,y)$.
  Let
  \begin{align}
    V=\begin{pmatrix}
      1&x_1&\cdots&x_1^d&y_1&x_1y_1&\cdots&x_1^{d-1}y_1&\cdots&y_1^d\\
      \vdots&\vdots&&\vdots&\vdots&\vdots&&\vdots&&\vdots\\
      1&x_N&\cdots&x_N^d&y_N&x_Ny_N&\cdots&x_N^{d-1}y_N&\cdots&y_N^d\\
    \end{pmatrix}
  \end{align}
  be the bivariate Vandermonde matrix of dimension $N$ by $(d+1)(d+2)/2$.
  The following statements are equivalent.
  \begin{enumerate}[(i)]
    \item There exists a unique set of bivariate Lagrange interpolation polynomials in the quotient space $\P_d(x,y)/\{f_1(x,y),\cdots,f_M(x,y)\}$.
    \item There exists a set of bivariate Lagrange interpolation polynomials in $\P_d(x,y)$.
    \item Any bivariate vanishing polynomial in $\P_d(x,y)$ can be expressed as a linear combination of $f_1(x,y),\cdots,f_M(x,y)$.
    \item The rank of $V$ is $N$.
  \end{enumerate}
\end{thm}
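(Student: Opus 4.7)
My plan is to translate the four statements into linear algebra via the natural identification of a polynomial $f(x,y)=\sum_{j+k\le d}c_{jk}x^jy^k\in\P_d(x,y)$ with its coefficient vector $c\in\R^{(d+1)(d+2)/2}$. Under this dictionary, evaluation at the $N$ given nodes is exactly the linear map $c\mapsto Vc$, so a bivariate vanishing polynomial corresponds to an element of $\ker V$, and a Lagrange basis exists in $\P_d(x,y)$ if and only if this map is surjective onto $\R^N$. The preliminary claim—existence of $M$ linearly independent vanishing polynomials—then follows immediately from rank-nullity: $\dim\ker V=(d+1)(d+2)/2-\text{rank}(V)\ge M$, so any basis of $\ker V$ provides the required $f_1,\ldots,f_M$.

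I would next handle the purely linear-algebraic equivalences (ii) $\Leftrightarrow$ (iv) $\Leftrightarrow$ (iii). For (ii) $\Leftrightarrow$ (iv): the existence of $L_k$ satisfying $L_k(x_j,y_j)=\d_{jk}$ for each $k$ is precisely the solvability of $Vc_k=e_k$ for every standard basis vector $e_k\in\R^N$, which is exactly surjectivity of $V$, i.e.\ $\text{rank}(V)=N$. For (iii) $\Leftrightarrow$ (iv): statement (iii) asserts $\ker V=\spa\{f_1,\ldots,f_M\}$; since the $f_i$ are already $M$ linearly independent vectors of $\ker V$, this is equivalent to $\dim\ker V=M$, which by rank-nullity is again equivalent to $\text{rank}(V)=N$.

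It remains to connect (i). Clearly (i) implies (ii), since the existence half of (i) lifts directly to existence in $\P_d(x,y)$. Conversely, assuming (ii) and (iii), if $\{L_k\}$ and $\{\t L_k\}$ are two Lagrange bases then each difference $L_k-\t L_k$ is a vanishing polynomial and hence, by (iii), lies in $\spa\{f_1,\ldots,f_M\}$; so the two sets coincide in the quotient space, yielding the uniqueness part of (i). Combined with the previous paragraph, this closes the cycle.

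The main obstacle is essentially nil—this is a compact exercise in linear algebra once the coefficient-vector dictionary is in place. The only point requiring care is unpacking (i) correctly: because adding any vanishing polynomial to a Lagrange lift produces another Lagrange lift, uniqueness in the quotient forces \emph{every} vanishing polynomial (not merely those that arise as differences of particular lifts) to lie in $\spa\{f_1,\ldots,f_M\}$, and this is why (iii), rather than a weaker variant, is the natural companion of (ii).
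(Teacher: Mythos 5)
Your proposal is correct and follows essentially the same route as the paper: both translate the statements into linear algebra via the coefficient-vector/Vandermonde-matrix dictionary, establish (ii) $\Leftrightarrow$ (iv) and (iii) $\Leftrightarrow$ (iv), note (i) $\Rightarrow$ (ii), and close the cycle by deriving (i) from (ii) and (iii) together. Your write-up merely makes explicit the rank--nullity bookkeeping that the paper leaves implicit.
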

\begin{proof}
  The coefficients of any bivariate vanishing polynomial in $\P_d(x,y)$ corresponds a vector $z\in\R^{(d+1)(d+2)/2}$ satisfying $Vz=0$.
  The existence of $f_1(x,y),\cdots, f_M(x,y)$ follows from the fact that the rank of $V$ is no more than $N$.
  Moreover, we have (iii) $\Leftrightarrow$ (iv).
  Any set of bivariate Lagrange interpolation in $\P_d(x,y)$ can be represented by the matrix $L$ of dimension $(d+1)(d+2)/2$ by $N$ such that $VL$ is the identity matrix in $\R^{N\times N}$. Hence, (ii) $\Leftrightarrow$ (iv).
  It is obvious that (i) $\Rightarrow$ (ii).
  Finally, coupling (ii) and (iii) gives (i).
  The proof is complete.
\end{proof}

\section{Existence of bivariate Lagrange basis polynomials}
Given $x_0>x_1>\cdots>x_n$ and $y_0>y_1>\cdots>y_{n+\s}$ where $n$ and $\s$ are nonnegative integers, we define two sets of checkerboard nodes
\begin{align}
  S_0&=\{(x_r,y_u):~0\le r\le n,~0\le u\le n+\s,~r+u~\text{even}\},\label{S0}\\
  S_1&=\{(x_r,y_u):~0\le r\le n,~0\le u\le n+\s,~r+u~\text{odd}\},\label{S1}
\end{align}
which consist of $N_0$ and $N_1$ nodes, respectively.
It is easily seen that $N_0+N_1=(n+1)(n+\s+1)$.
Moreover, we have $N_0-N_1=1$ and $N_\tau=[(n+1)(n+\s+1)+1]/2-\tau$ if both $n$ and $\s$ are even, while $N_0=N_1=(n+1)(n+\s+1)/2$ if either $n$ or $\s$ is odd.
We need to find a set of bivariate Lagrange basis polynomials for $S_\tau$ with $\tau=0$ or $\tau=1$.
According to Lemma \ref{lem-existence}, there exist orthogonal polynomials $\{p_j(x)\}_{j=0}^n$ and $\{q_k(y)\}_{k=0}^{n+\s}$ such that
$p_0(x)=1$, $p_1(x)=a_0x+b_0$, $q_0(y)=1$, $q_1(x)=c_0y+d_0$, and
\begin{align}
  p_{j+1}(x)+p_{j-1}(x)&=(a_jx+b_j)p_j(x),~~1\le j\le n-1,\\
  q_{k+1}(y)+q_{k-1}(y)&=(c_ky+d_k)q_k(y),~~1\le k\le n+\s-1,
\end{align}
where $a_j>0$ for $0\le j\le n-1$, and $c_k>0$ for $0\le k\le n+\s-1$, and
\begin{align}
  a_j=a_{n-j}, b_j=b_{n-j},&~~1\le j\le n-1,\label{a}\\
  c_k=c_{n+\s-k}, d_k=d_{n+\s-k},&~~1\le k\le n+\s-1,\label{c}\\
  p_{n-j}(x_r)=(-1)^rp_j(x_r),&~~0\le j,r\le n,\label{p}\\
  q_{n+\s-k}(y_u)=(-1)^uq_k(y_u),&~~0\le k,u\le n+\s.\label{q}
\end{align}
For any $(x_r,y_u)\in S_\tau$ and $(x_s,y_v)\in S_\tau$, where $\tau$ is either $0$ or $1$, it is easily seen that $r+u+s+v$ is even.
Hence, we have from the above two equations
\begin{align}\label{pq}
  p_j(x_r)p_i(x_s)q_k(y_u)q_l(y_v)=p_{n-j}(x_r)p_{n-i}(x_s)q_{n+\s-k}(y_u)q_{n+\s-l}(y_v),
\end{align}
for all $0\le j,i\le n$ and $0\le k,l\le n+\s$.
Moreover, we have the following Christoffel-Darboux formulas:
\begin{align}
  (x_r-x_s)\sum_{j=0}^ia_jp_j(x_r)p_j(x_s)&=p_{i+1}(x_r)p_i(x_s)-p_i(x_r)p_{i+1}(x_s),\label{CD-x}\\
  (y_u-y_v)\sum_{k=0}^lc_kq_k(y_u)q_k(y_v)&=q_{l+1}(y_u)q_l(y_v)-q_l(y_u)q_{l+1}(y_v),\label{CD-y}
\end{align}
for $0\le i\le n-1$ and $0\le l\le n+\s-1$.
Given any integers $0\le\d\le\s-1$, $0\le s\le n$ and $0\le v\le n+\s$, we define the bivariate polynomial
\begin{equation}\label{Kd}
  K_\d(x,y;x_s,y_v)=\sum_{j=0}^{n-1}a_jp_j(x)p_j(x_s)\sum_{k=0}^{n-j+\d}c_kq_k(y)q_k(y_v)\in\P_{n+\d}(x,y).
\end{equation}
It is readily seen from \eqref{pq} and \eqref{CD-y} that
\begin{align*}
  &(y_u-y_v)K_\d(x_r,y_u;x_s,y_v)
  \\=&\sum_{j=0}^{n-1}a_jp_j(x_r)p_j(x_s)[q_{n-j+\d+1}(y_u)q_{n-j+\d}(y_v)-q_{n-j+\d}(y_u)q_{n-j+\d+1}(y_v)]
  \\=&\sum_{j=0}^{n-1}a_jp_{n-j}(x_r)p_{n-j}(x_s)[q_{j+\s-\d-1}(y_u)q_{j+\s-\d}(y_v)-q_{j+\s-\d}(y_u)q_{j+\s-\d-1}(y_v)]
  \\=&\sum_{j=1}^na_{n-j}p_j(x_r)p_j(x_s)[q_{n-j+\s-\d-1}(y_u)q_{n-j+\s-\d}(y_v)-q_{n-j+\s-\d}(y_u)q_{n-j+\s-\d-1}(y_v)],
\end{align*}
and
\begin{align*}
  &(y_u-y_v)K_{\s-\d-1}(x_r,y_u;x_s,y_v)
  \\=&\sum_{j=0}^{n-1}a_jp_j(x_r)p_j(x_s)[q_{n-j+\s-\d}(y_u)q_{n-j+\s-\d-1}(y_v)-q_{n-j+\s-\d-1}(y_u)q_{n-j+\s-\d}(y_v)].
\end{align*}
Adding the above two equations and making use of \eqref{a}, \eqref{pq} and \eqref{CD-y} yield
\begin{align*}
  &(y_u-y_v)[K_\d(x_r,y_u;x_s,y_v)+K_{\s-\d-1}(x_r,y_u;x_s,y_v)]
  \\=&a_0p_n(x_r)p_n(x_s)[q_{\s-\d-1}(y_u)q_{\s-\d}(y_v)-q_{\s-\d}(y_u)q_{\s-\d-1}(y_v)]
  \\&~+a_0p_0(x_r)p_0(x_s)[q_{n+\s-\d}(y_u)q_{n+\s-\d-1}(y_v)-q_{n+\s-\d-1}(y_u)q_{n+\s-\d}(y_v)]
  \\=&a_0p_n(x_r)p_n(x_s)[q_{\s-\d-1}(y_u)q_{\s-\d}(y_v)-q_{\s-\d}(y_u)q_{\s-\d-1}(y_v)]
  \\&~+a_0p_n(x_r)p_n(x_s)[q_{\d}(y_u)q_{\d+1}(y_v)-q_{\d+1}(y_u)q_{\d}(y_v)]
  \\=&-(y_u-y_v)a_0p_n(x_r)p_n(x_s)[\sum_{k=0}^{\s-\d-1}c_kq_k(y_u)q_k(y_v)+\sum_{k=0}^{\d}c_kq_k(y_u)q_k(y_v)],
\end{align*}
which can be written as
\begin{equation}\label{Gy}
  (y_u-y_v)[K_\d(x_r,y_u;x_s,y_v)+K_{\s-\d-1}(x_r,y_u;x_s,y_v)+L(x_r,y_u;x_s,y_v)]=0,
\end{equation}
where
\begin{equation}\label{J}
  J(x,y;x_s,y_v)=a_0p_n(x)p_n(x_s)[\sum_{k=0}^{\s-\d-1}c_kq_k(y)q_k(y_v)+\sum_{k=0}^{\d}c_kq_k(y)q_k(y_v)].
\end{equation}
Interchanging the double sum in \eqref{Kd} gives another expression
\begin{equation*}
  K_\d(x,y;x_s,y_v)=\sum_{k=0}^{n+\d}c_kq_k(y)q_k(y_v)\sum_{j=0}^{\min\{n-1,n+\d-k\}}a_jp_j(x)p_j(x_s).
\end{equation*}
It is readily seen from \eqref{c}, \eqref{pq} and \eqref{CD-x} that
\begin{align*}
  &(x_r-x_s)K_\d(x_r,y_u;x_s,y_v)
  \\=&\sum_{k=1+\d}^{n+\d}c_kq_k(y_u)q_k(y_v)[p_{n-k+\d+1}(x_r)p_{n-k+\d}(x_s)-p_{n-k+\d}(x_r)p_{n-k+\d+1}(x_s)]
  \\&+\sum_{k=0}^{\d}c_kq_k(y_u)q_k(y_v)[p_{n}(x_r)p_{n-1}(x_s)-p_{n-1}(x_r)p_{n}(x_s)]
  \\=:&A_\d+B_\d,
\end{align*}
where
\begin{align*}
  A_\d=&\sum_{k=1+\d}^{n+\d}c_{n+\s-k}q_{n+\s-k}(y_u)q_{n+\s-k}(y_v)[p_{k-\d-1}(x_r)p_{k-\d}(x_s)-p_{k-\d}(x_r)p_{k-\d-1}(x_s)]
  \\=&\sum_{k=\s-\d}^{n+\s-\d-1}c_kq_k(y_u)q_k(y_v)[p_{n-k+\s-\d-1}(x_r)p_{n-k+\s-\d}(x_s)-p_{n-k+\s-\d}(x_r)p_{n-k+\s-\d-1}(x_s)]
  \\=&-A_{\s-\d-1},
\end{align*}
and
\begin{align*}
  B_\d=&\sum_{k=0}^{\d}c_kq_{n-k+\s}(y_u)q_{n-k+\s}(y_v)[p_0(x_r)p_1(x_s)-p_1(x_r)p_0(x_s)]
  \\=&-(x_r-x_s)\sum_{k=0}^{\d}c_kq_{n-k+\s}(y_u)q_{n-k+\s}(y_v)a_0p_0(x_r)p_0(x_s)
  \\=&-(x_r-x_s)\sum_{k=0}^{\d}c_kq_k(y_u)q_k(y_v)a_0p_n(x_r)p_n(x_s).
\end{align*}
Hence,
$$(x_r-x_s)[K_\d(x_r,y_u;x_s,y_v)+K_{\s-\d-1}(x_r,y_u;x_s,y_v)]=B_\d+B_{\s-\d-1}.$$
Recall the definition of $J$ in \eqref{J}, we obtain
\begin{equation}\label{Gx}
  (x_r-x_s)[K_\d(x_r,y_u;x_s,y_v)+K_{\s-\d-1}(x_r,y_u;x_s,y_v)+J(x_r,y_u;x_s,y_v)]=0.
\end{equation}
Finally, we choose $\d=\lfloor\s/2\rfloor$ and define the bivariate polynomial
\begin{align}\label{G}
  G(x,y;x_s,y_v)=&K_\d(x,y;x_s,y_v)+K_{\s-\d-1}(x,y;x_s,y_v)+J(x,y;x_s,y_v)
  \notag\\=&\sum_{j=0}^{n-1}a_jp_j(x)p_j(x_s)[\sum_{k=0}^{n-j+\d}c_kq_k(y)q_k(y_v)
  +\sum_{k=0}^{n-j+\s-\d-1}c_kq_k(y)q_k(y_v)]
  \notag\\&+a_0p_n(x)p_n(x_s)[\sum_{k=0}^{\s-\d-1}c_kq_k(y)q_k(y_v)+\sum_{k=0}^{\d}c_kq_k(y)q_k(y_v)].
\end{align}
It is obvious that $G\in\P_{n+\d}(x,y)$ and $G(x_s,y_v;x_s,y_v)>0$. Coupling \eqref{Gy} and \eqref{Gx} gives $G(x_r,y_u;x_s,y_v)=0$ if either $x_r\neq x_s$ or $y_u\neq y_v$.
We summarize our results in the following theorem.
\begin{thm}\label{thm-existence}
  Given any $x_0>x_1>\cdots>x_n$ and $y_0>y_1>\cdots>y_{n+\s}$ where $n$ and $\s$ are nonnegative integers.
  Let $S_\tau$ with either $\tau=0$ or $\tau=1$ be defined in \eqref{S0} or \eqref{S1}.
  Set $\d=\lfloor\s/2\rfloor$.
  There exists a set of bivariate Lagrange basis polynomials in $\P_{n+\d}(x,y)$ for $S_\tau$ which can be defined as
  \begin{equation}\label{L}
    L(x,y;x_s,y_v)=G(x,y;x_s,y_v)/G(x_s,y_v;x_s,y_v),
  \end{equation}
  for each $(x_s,y_v)\in S_\tau$,
  where $G$ is defined in \eqref{G}.
\end{thm}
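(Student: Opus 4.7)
The plan is to reduce the theorem to three checks on the polynomial $G$ defined in \eqref{G}: (a) $G\in\P_{n+\d}(x,y)$; (b) $G(x_s,y_v;x_s,y_v)>0$; and (c) $G(x_r,y_u;x_s,y_v)=0$ whenever $(x_r,y_u)\in S_\tau\setminus\{(x_s,y_v)\}$. Once these three are in hand, the polynomial $L$ defined by \eqref{L} automatically equals $1$ at $(x_s,y_v)$ and vanishes at every other node of $S_\tau$, which is exactly the Lagrange basis property, while (a) gives $L\in\P_{n+\d}(x,y)$.

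For (a), I would inspect the three summands in \eqref{G} separately. In $K_\d$ the inner sum stops at $k=n-j+\d$, so every monomial $p_j(x)q_k(y)$ that appears has total degree at most $j+(n-j+\d)=n+\d$. The choice $\d=\lfloor\s/2\rfloor$ forces $\s-\d-1\le\d$, hence $K_{\s-\d-1}\in\P_{n+\s-\d-1}(x,y)\subset\P_{n+\d}(x,y)$, and the same inequality bounds the total degree of the $J$-term by $n+\max(\d,\s-\d-1)=n+\d$. For (b), evaluating \eqref{G} at $(x,y)=(x_s,y_v)$ turns every summand into a sum of nonnegative products of the form $a_jp_j(x_s)^2c_kq_k(y_v)^2$; the $(j,k)=(0,0)$ contribution inside $K_\d$ already contributes $a_0c_0>0$ by the positivity part of Lemma \ref{lem-existence}, so the whole expression is strictly positive.

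The substantive step is (c), for which the identities \eqref{Gy} and \eqref{Gx} already assembled above the theorem statement do all the work. They assert
\[(y_u-y_v)G(x_r,y_u;x_s,y_v)=0,\qquad (x_r-x_s)G(x_r,y_u;x_s,y_v)=0,\]
for every pair of nodes $(x_r,y_u),(x_s,y_v)\in S_\tau$. For a distinct pair, at least one of $x_r\ne x_s$ or $y_u\ne y_v$ must hold, and cancellation of that nonzero factor in the corresponding identity yields (c). The one conceptual subtlety — and the only place where the proof is not pure bookkeeping — is that $\d=\lfloor\s/2\rfloor$ is precisely the choice that keeps $K_\d+K_{\s-\d-1}+J$ inside $\P_{n+\d}(x,y)$ while simultaneously respecting both cancellation identities; the symmetric appearance of the indices $\d$ and $\s-\d-1$ in \eqref{Gy}--\eqref{Gx} is what forces this balance, and no other value of $\d$ will collapse both $K_\d$ and $K_{\s-\d-1}$ into the same degree-$(n+\d)$ space.
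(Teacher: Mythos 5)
Your proposal is correct and follows the paper's own route: the paper's proof of this theorem is precisely the derivation of \eqref{Gy} and \eqref{Gx} preceding the statement, after which it asserts exactly your checks (a)--(c), namely that $G\in\P_{n+\d}(x,y)$ and $G(x_s,y_v;x_s,y_v)>0$ are ``obvious'' and that coupling \eqref{Gy} with \eqref{Gx} forces $G(x_r,y_u;x_s,y_v)=0$ whenever the two nodes differ in at least one coordinate. Your degree count using $\s-\d-1\le\d$ and your positivity argument via the $(j,k)=(0,0)$ term simply spell out what the paper leaves implicit.
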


\section{Uniqueness of bivariate Lagrange basis polynomials}
We use the same notations as in the previous section.
To prove that the set of bivariate Lagrange basis polynomials constructed in \eqref{L} is unique in a certain quotient space of $\P_{n+\d}(x,y)$,
we only need to find $M=(n+\d+1)(n+\d+2)/2-N_\tau$ linearly independent bivariate vanishing polynomials in $\P_{n+\d}(x,y)$, where $\d=\lfloor\s/2\rfloor$ and $N_\tau$ is the number of nodes in $S_\tau$ with $\tau=0$ or $\tau=1$. In other words, we shall construct a linear subspace $Q$ of bivariate vanishing polynomials in $\P_{n+\d}(x,y)$ with dimension $M$ and then apply Theorem \ref{thm-uniqueness}.
First, we introduce the linear subspace of bivariate vanishing polynomials:
\begin{equation}\label{V}
  V=\spa\{(x-x_0)\cdots(x-x_n)x^jy^k,~~j\ge0,~k\ge0,~j+k\le \d-1\}.
\end{equation}
It is obvious that $V$ is a subspace of $\P_{n+\d}(x,y)$ with dimension $\d(\d+1)/2$.
We shall consider the following three cases respectively.

\begin{enumerate}[{Case} I.]
\item $\s=2\d+1$ is odd.

We have $N_0=N_1=(n+1)(n+\s+1)/2$ and
$$M={(n+\d+1)(n+\d+2)\over2}-{(n+1)(n+\s+1)\over2}={\d(\d+1)\over2}$$
for either $\tau=0$ or $\tau=1$.
We simply set
\begin{equation}\label{Q1}
  Q=V.
\end{equation}

\item $\s=2\d$ is even and $n=2m-1$ is odd.

We have $N_0=N_1=(n+1)(n+\s+1)/2$ and
$$M={(n+\d+1)(n+\d+2)\over2}-{(n+1)(n+\s+1)\over2}={\d(\d+1)\over2}+m$$
for either $\tau=0$ or $\tau=1$.
Note from \eqref{p} and \eqref{q} that
$$p_{n-j}(x_r)q_{j+\d}(y_u)=(-1)^{r+u}p_j(x_r)q_{n+\d-j}(y_u)=(-1)^\tau p_j(x_r)q_{n+\d-j}(y_u),$$
for any $(x_r,y_u)\in S_\tau$.
We define
\begin{equation}\label{Q2}
  Q=V+\spa\{p_{n-j}(x)q_{j+\d}(y)-(-1)^\tau p_j(x)q_{n+\d-j}(y),~~0\le j\le m-1\},
\end{equation}
which is a subspace of bivariate vanishing polynomials in $\P_{n+\d}(x,y)$ with dimension $M=\d(\d+1)/2+m$.

\item $\s=2\d$ is even and $n=2m$ is even.

We have $N_\tau=[(n+1)(n+\s+1)+1]/2-\tau$ and
\begin{align*}
  M&={(n+\d+1)(n+\d+2)\over2}-{(n+1)(n+\s+1)+1\over2}+\tau
  \\&={\d(\d+1)\over2}+m+\tau
\end{align*}
for $\tau=0,1$.
We define
\begin{equation}\label{Q3}
  Q=V+\spa\{p_{n-j}(x)q_{j+\d}(y)-(-1)^\tau p_j(x)q_{n+\d-j}(y),~~0\le j\le m-1+\tau\},
\end{equation}
which is a subspace of bivariate vanishing polynomials in $\P_{n+\d}(x,y)$ with dimension $M=\d(\d+1)/2+m+\tau$.
\end{enumerate}
On account of Theorem \ref{thm-uniqueness}, we have the following uniqueness property of bivariate Lagrange basis polynomials for $S_\tau$ with $\tau=0$ or $\tau=1$.
\begin{thm}
  Given any $x_0>x_1>\cdots>x_n$ and $y_0>y_1>\cdots>y_{n+\s}$ where $n$ and $\s$ are nonnegative integers.
  Let $S_\tau$ with either $\tau=0$ or $\tau=1$ be defined in \eqref{S0} or \eqref{S1}.
  Set $\d=\lfloor\s/2\rfloor$.
  The set of bivariate Lagrange basis polynomials for $S_\tau$ defined in \eqref{L} is unique in the quotient space $\P_{n+\d}(x,y)/Q$, where $Q$ is defined in \eqref{Q1}-\eqref{Q3} depending on odd-even properties of $\s$ and $n$.
\end{thm}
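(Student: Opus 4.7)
The plan is to invoke Theorem \ref{thm-uniqueness}: Theorem \ref{thm-existence} supplies condition (ii) of that result, so by the equivalence (ii) $\Leftrightarrow$ (iv) the bivariate Vandermonde matrix associated with $S_\tau$ has rank $N_\tau$, and therefore the space of bivariate vanishing polynomials in $\P_{n+\d}(x,y)$ has dimension exactly $M=(n+\d+1)(n+\d+2)/2-N_\tau$. It thus suffices to show that the explicit space $Q$ in \eqref{Q1}--\eqref{Q3} is contained in this vanishing space and has dimension $M$; then $Q$ must coincide with the full vanishing space, condition (iii) holds, and Theorem \ref{thm-uniqueness} delivers uniqueness in $\P_{n+\d}(x,y)/Q$.

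The vanishing of the generators is direct. Every element of $V$ carries the factor $(x-x_0)\cdots(x-x_n)$ and so annihilates every node $(x_r,y_u)$. For the extra generators $f_j(x,y):=p_{n-j}(x)q_{j+\d}(y)-(-1)^\tau p_j(x)q_{n+\d-j}(y)$ appearing in Cases II and III (where $\s=2\d$ gives $n+\s-(j+\d)=n+\d-j$), the alternation identities \eqref{p} and \eqref{q} yield, for any $(x_r,y_u)\in S_\tau$,
\[
p_{n-j}(x_r)q_{j+\d}(y_u)=(-1)^{r+u}p_j(x_r)q_{n+\d-j}(y_u)=(-1)^\tau p_j(x_r)q_{n+\d-j}(y_u),
\]
so $f_j(x_r,y_u)=0$; a quick degree inspection confirms that every generator lies in $\P_{n+\d}(x,y)$.

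The main bookkeeping, and the step I expect to require the most care, is the dimension count. For $V$, the monomials $x^jy^k$ with $j+k\le\d-1$ are linearly independent, so $\dim V=\d(\d+1)/2$. For the $f_j$, the two summands $p_{n-j}(x)q_{j+\d}(y)$ and $p_j(x)q_{n+\d-j}(y)$ have $x$-degrees $n-j$ and $j$ respectively, so whenever $j<n/2$ the $x$-degree of $f_j$ equals $n-j$; the only coincidental case is Case III with $n=2m$, $\tau=1$, $j=m$, where $f_m$ collapses to $2p_m(x)q_{m+\d}(y)$ of $x$-degree $m$. In every case the $f_j$ then have pairwise distinct $x$-degrees, which forces linear independence within that family. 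Since every $f_j$ has $x$-degree at most $n$ while every nonzero element of $V$ has $x$-degree at least $n+1$, the two families are jointly independent, and a direct tally of the generators in each of Cases I--III matches the claimed value of $M$. This yields $\dim Q=M$, so $Q$ is the entire vanishing space and Theorem \ref{thm-uniqueness} completes the argument.
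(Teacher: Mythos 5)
Your proposal is correct and follows essentially the same route as the paper: construct $Q$ as a space of vanishing polynomials of dimension $M$ (the subspace $V$ with the factor $(x-x_0)\cdots(x-x_n)$, plus the alternation-based generators $p_{n-j}(x)q_{j+\d}(y)-(-1)^\tau p_j(x)q_{n+\d-j}(y)$), then combine the existence result of Theorem \ref{thm-existence} with the equivalences of Theorem \ref{thm-uniqueness}. The only difference is that you spell out the linear independence via distinct $x$-degrees (including the degenerate $j=m$, $\tau=1$ case), a step the paper asserts without proof; your argument for it is sound.
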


\section*{Acknowledgment}
LC is partially supported by National Natural Science Foundation of China (No. 11571375), the Natural Science Funding of Shenzhen University (No. 2018073), and the Shenzhen Scientific Research and Development Funding Program (No. JCYJ20170302144002028).

\end{document}